\newtheorem{theorem}{Theorem}[section]
\newtheorem{lemma}[theorem]{Lemma}
\theoremstyle{remark}
\newtheorem{remark}[theorem]{Remark}
\numberwithin{equation}{section}
\newcommand{\ZZ}{{\mathbf{Z}}}
\newcommand{\col}{\ {:}\ }
\begin{document}

\title[Lower bounds on double-base representations]
{Lower bounds on the lengths\\
 of double-base representations}

\author[Dimitrov]{Vassil S. Dimitrov}
\address{
Center for Information Security and Cryptography,
University of Calgary,
2500 University Drive NW,
Calgary, AB T2N 1N4, Canada
}
\email{dimitrov@atips.ca}

\author[Howe]{Everett W. Howe}
\address{Center for Communications Research,
         4320 Westerra Court,
         San Diego, CA 92121-1967, USA}
\email{however@alumni.caltech.edu}
\urladdr{\url{http://www.alumni.caltech.edu/~however/}}

\subjclass[2010]{Primary 11A67; Secondary 11A63}

\date{1 February 2011}

\begin{abstract}
A \emph{double-base representation} of an integer $n$ is an expression
$n = n_1 + \cdots + n_r$, where the $n_i$ are (positive or negative) integers
that are divisible by no primes other than $2$ or $3$; the \emph{length} of the
representation is the number $r$ of terms.  It is known that there is a
constant $a >0$ such that every integer $n$ has a double-base representation of
length at most $a\log n / \log\log n$.  We show that there is a constant $c>0$
such that there are infinitely many integers $n$ whose shortest double-base
representations have length greater than 
$c\log n / (\log\log n \log\log\log n)$.  

Our methods allow us to find the smallest positive integers with no double-base
representations of several lengths.  In particular, we show that $103$ is the
smallest positive integer with no double-base representation of length $2$, 
that $4985$ is the smallest positive integer with no double-base representation
of length $3$, that $641687$ is the smallest positive integer with no 
double-base representation of length $4$, and that $326552783$ is the smallest
positive integer with no double-base representation of length $5$.
\end{abstract}

\maketitle

\section{Introduction}

A \emph{$\{2,3\}$-integer} is a positive or negative integer with no prime
divisors other than $2$ or $3$.  A \emph{length-$r$ double-base representation}
of an integer $n$ is an expression $n = n_1 + \cdots + n_r$, where each summand
is a $\{2,3\}$-integer.  Double-base representations were introduced by Imbert,
Mishra, and the first author~\cites{DIM2008, DIM2005} to help speed the
calculation of large multiples of points on elliptic curves.  (An earlier
version of the double-base number system, using only positive summands, was 
used by Jullien, Miller, and the first author~\cite{DJM1998} for modular 
exponentiation.)  In this paper we will be concerned not with the practical
applications of double-base representations
\cites{ADDS2006, BI2009, DI2006, MD2008, WLCL2006, ZZH2008}, but rather with
number-theoretic questions arising from their study.

The \emph{span} $s(n)$ of an integer $n$ is the smallest $r$ such that $n$ has
a double-base representation of length $r$.  Note that if an integer has a 
double-base representation of length $r>0$, then it also has a double-base 
representation of length $r+1$; to see this, simply replace a summand $n_i$ in 
a length-$r$ representation with the expression $3n_i - 2n_i$.  Thus, a 
positive integer has double-base representations of every length greater than
or equal to its span.

It is already known~\citelist{\cite{DJM1998}*{Thm.~4}\cite{DIM2008}*{Thm.~2}}
that $s(n) = O(\log n/\log\log n)$.  Our first result gives a lower bound for 
the growth of $s(n)$.

\begin{theorem}
\label{T:growth}
There is a constant $c>0$ such that for infinitely many values of $n$ we have 
\[s(n) > \frac{c\log n}{\log\log n\log\log\log n}.\]
\end{theorem}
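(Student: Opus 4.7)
The plan is to argue by counting. Suppose, for contradiction, that every $n \in \{1,\ldots,M\}$ has $s(n) \le r$; I aim to derive a lower bound on $r$ in terms of $M$, so that letting $M\to\infty$ produces infinitely many $n$ satisfying the claim.

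The number of positive $\{2,3\}$-integers of size at most $B$ is $O((\log B)^2)$, so the number of ordered $r$-tuples of signed $\{2,3\}$-integers each of absolute value at most $B$ is at most $(c(\log B)^2)^r$, and this is an upper bound on the number of integers representable as such a sum. The crucial step is therefore to show that every $n \in [1,M]$ with $s(n) \le r$ has a length-$r$ representation whose summands have absolute value at most some explicit $B = B(M,r)$. For $r=2$ this is supplied by Tijdeman's theorem on gaps between $\{2,3\}$-integers: if $n = T_1 - T_2$ with $T_1 > T_2 > 0$ both $\{2,3\}$-integers, then $T_1 \le n(\log n)^{O(1)}$, via Baker's theorem on linear forms in logarithms. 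For general $r$ one iterates: order the summands so that $|T_1|$ is largest; either $|T_1|$ is of order $n$, or else the remaining summands must approximately cancel $\epsilon_1 T_1$, reducing the problem to a shorter signed $S$-unit equation whose solutions can be controlled by quantitative results on $S$-unit equations.

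Combining these ingredients, if every $n \in [1,M]$ has $s(n) \le r$, then $M \le (c(\log B)^2)^r$, so $\log M = O(r\log\log B)$ with $B = B(M,r)$ coming from the summand bound. Substituting the bound on $B$ and solving for $r$ yields the required lower bound on $r$, and letting $M \to \infty$ gives the infinitely many $n$ of the theorem. The principal obstacle is quantifying the summand bound for general $r$: Tijdeman handles $r=2$ directly, but the inductive extension---tracking the precise dependence of $B$ on both $M$ and $r$---is where the technical work lies, and it is the rate of growth of $B$ with $r$ that produces the additional $\log\log\log n$ factor in the denominator of the final bound.
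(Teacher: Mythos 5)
The crucial unproved step in your argument is the summand bound: you assume that every $n \le M$ with $s(n) \le r$ admits a length-$r$ representation whose summands have absolute value at most an explicit $B(M,r)$. For $r=2$ this does follow from Baker-type lower bounds for $|2^a3^b - 2^c3^d|$ (Tijdeman's gap theorem), but for $r \ge 3$ the step is not available with current technology. What you need is an effective lower bound for a nonvanishing sum of three or more $\{2,3\}$-integers in terms of the largest summand --- equivalently, effective size bounds for $S$-unit equations with three or more $S$-unit terms --- and no such bound is known; this is a well-known open problem. The quantitative results on $S$-unit equations you invoke (Evertse, Schlickewei--Schmidt, all resting on the subspace theorem) bound the \emph{number} of nondegenerate solutions uniformly in the coefficients, but say nothing effective about their \emph{size}, so they cannot supply $B(M,r)$. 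Your inductive reduction also does not go through as stated: if the remaining summands ``approximately cancel'' $\epsilon_1 T_1$, that is not a vanishing subsum you can delete; controlling such near-cancellations is precisely the missing effective bound, so the iteration never gets started. A consistency check points to the same trouble: if a bound $B \le \exp((\log M)^{O(1)})$ were available for all $r$, your count would yield $r \gg \log M/\log\log M$, with no triple logarithm at all --- stronger than anything known --- which signals that the summand bound, not the bookkeeping, is the real obstruction, and that the triple logarithm in the theorem must have a different origin.

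Indeed, the paper sidesteps the archimedean issue entirely by counting modulo an auxiliary integer. Erd\H{o}s, Pomerance, and Schmutz provide infinitely many squarefree $m$ with $\lambda(m) < (\log m)^{d\log\log\log m}$; for such $m$ the image $T(m)$ of the $\{2,3\}$-integers in $\ZZ/m\ZZ$ has at most $2(\lambda(m)+1)^2$ elements, so the number of sums of $r$ elements of $T(m)$ is less than $m$ once $r \approx \log m/(\log\log m \log\log\log m)$. Some residue class mod $m$ is therefore missed, and any $n < m$ in that class satisfies $s(n) > r > c\log n/(\log\log n\log\log\log n)$ (using monotonicity of $\log x/(\log\log x\log\log\log x)$). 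The triple logarithm thus comes from the bound on Carmichael's $\lambda$, not from summand sizes. Your pigeonhole skeleton is the same in spirit, but it must be executed in $\ZZ/m\ZZ$, where the smallness of the set of $\{2,3\}$-values is provable, rather than in $\ZZ$ with an archimedean cutoff, where it is not.
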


For each $i > 0$, let $S(i)$ denote the smallest positive integer with 
span~$i$.  Clearly $S(1) = 1$ and $S(2) = 5$.  The fact that $S(3) = 103$, 
which can be proven through the use of a result of Ellison~\cite{Ellison}, is 
stated in~\cite{DIM2008}.  Furthermore, it is conjectured in~\cite{DIM2008} 
that $S(4) = 4985$.  Methods related to those we use in the proof of 
Theorem~\ref{T:growth} allow us to compute the values of $S(i)$ for $i\le 6$.

\begin{theorem}
\label{T:spans}
We have
\begin{align*}
S(3) &= 103,\\
S(4) &= 4985,\\
S(5) &= 641687,\\
S(6) &= 326552783.
\end{align*}
\end{theorem}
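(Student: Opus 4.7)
The plan is to verify two complementary assertions for each $i \in \{3,4,5,6\}$: (a) the claimed value $S(i)$ admits a length-$i$ double-base representation but no shorter one, and (b) every positive integer strictly less than $S(i)$ admits a representation of length at most $i-1$. Exhibiting length-$i$ representations for the four specific integers is straightforward by inspection; the substance of the proof lies in converting (a) and (b) into finite computations.

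The key enabling step is to bound the summands that can appear in a shortest length-$r$ representation of an integer $n$, in terms of $|n|$ and $r$. A priori, individual summands $\pm 2^a 3^b$ could be arbitrarily large, balanced by cancellations. However, using linear-forms-in-logarithms methods --- the circle of ideas underlying Ellison's theorem and the proof of Theorem~\ref{T:growth} --- one obtains a computable function $B(N,r)$ such that every $n$ with $|n| \leq N$ and span at most $r$ admits a length-$r$ representation in which every summand has absolute value at most $B(N,r)$. This reduces (a) and (b) to a finite search.

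With such a bound in hand, the natural computation is a sieve. Let $U_r(N)$ denote the set of integers in $[-rB(N,r),\,rB(N,r)]$ expressible as a sum of at most $r$ $\{2,3\}$-integers, each of absolute value at most $B(N,r)$. One computes $U_r(N)$ incrementally as a sumset by starting with $U_1(N)$ (the eligible $\{2,3\}$-integers) and forming $U_r(N) = U_{r-1}(N) + U_1(N)$, truncated to the relevant window. Then $S(i)$ is the least positive integer lying in $U_i(N) \setminus U_{i-1}(N)$, provided $N$ is chosen larger than the candidate. For $i \leq 4$ this is a modest calculation; for $i = 5$ and \emph{a fortiori} $i = 6$, where the threshold is on the order of $3.3 \times 10^8$, the sumset must be represented compactly (for example as a bit-vector) and intermediate sums handled carefully for memory, but no fundamentally new idea is required.

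The main obstacle is keeping $B(N,r)$ small enough that the enumeration is actually feasible. A direct invocation of general linear-forms-in-logarithms estimates yields bounds many orders of magnitude too large to work with. The sharpening needed is recursive: if one summand of a short representation of $n$ is very large, then the sum of the remaining terms must lie very close to it, and a quantitative inequality on a $\{2,3\}$-integer close to a short sum of $\{2,3\}$-integers --- a generalization of the Ellison bound on pairs --- constrains this extremal summand. Chaining such inequalities through the $r$ summands yields an explicit, workable $B(N,r)$ for the thresholds relevant to $i \leq 6$. Establishing that chained bound rigorously, rather than the subsequent sieve, is the delicate part; once in place, the four claimed values are confirmed by direct (if substantial) numerical computation.
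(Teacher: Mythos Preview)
Your strategy diverges from the paper's, and the divergence exposes a real gap. The paper does \emph{not} bound the summands via linear forms in logarithms; instead, to show that (say) $n=326552783$ has no length-$5$ representation, it finds a modulus $m$ for which the image of the $\{2,3\}$-integers in $\ZZ/m\ZZ$ is so small that the image of $n$ simply cannot be written as an appropriate sum of five such images (after the reduction to doubly-primitive representations via Lemma~\ref{L:reps}). This congruential obstruction sidesteps entirely the question of how large the summands in a hypothetical representation might be. Your remark that the proof of Theorem~\ref{T:growth} rests on linear-forms-in-logarithms ideas is also off: that proof, too, is purely modular, using the Erd\H{o}s--Pomerance--Schmutz construction of integers with tiny Carmichael function.

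The gap in your plan is the function $B(N,r)$. For $r=2$ one can indeed extract an explicit bound from Ellison-type inequalities on $|2^a 3^b - 2^{a'}3^{b'}|$, but for $r\ge 3$ the ``chained'' argument you sketch is not a proof: if one summand is huge, the remaining $r-1$ summands must nearly cancel it, but that residual is a short sum of $\{2,3\}$-integers, not a single $\{2,3\}$-integer, and the known Diophantine lower bounds do not directly control how close such a sum can come to a given $\{2,3\}$-integer. Turning this into an effective bound good enough to handle $r=5$ and $N\approx 3.3\times 10^8$ is not a matter of sharpening constants---it would require a new Diophantine result. By contrast, the modular method needs no such bound at all, which is exactly what makes the computation of $S(6)$ feasible.
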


The proofs of both theorems rely on finding integers $m$ such that the 
reduction modulo $m$ of the set of $\{2,3\}$-integers is a very small subset 
of~$\ZZ/m\ZZ$.  We make this notion precise and prove Theorem~\ref{T:growth} 
in Section~\ref{S:growth}. In Section~\ref{S:primitive} we introduce the idea
of a \emph{doubly-primitive} double-base representation and we show that the
problem of determining whether an integer has a length-$r$ double-base 
representation is equivalent to determining whether certain other related
integers have doubly-primitive representations of various lengths. In 
Section~\ref{S:sparse} we find several moduli $m$ that are useful in producing
proofs that integers do not have doubly-primitive representations of lengths up
to $5$. Finally, in Section~\ref{S:spans} we use the moduli produced in 
Section~\ref{S:sparse} to prove Theorem~\ref{T:spans}.

Our proof of Theorem~\ref{T:spans} depends on computer calculations.  The 
programs we used, some written in Magma~\cite{magma} and some in C, can be 
found on the second author's web site: start at 
\begin{center}
\url{http://www.alumni.caltech.edu/~however/biblio.html}
\end{center}
and follow the links related to this paper.

In a recent paper~\cite{AHL2009}, {\'A}d{\'a}m, Hajdu, and Luca study the set
of integers that can be expressed in the form $a_1 s_1 + \cdots + a_k s_k$, 
where $k$ is fixed, the coefficients $a_i$ are integers taken from a fixed 
finite set~$A$, and the $s_i$ are integers that are $S$-units for some fixed 
finite set $S$ of primes.  In the special case where $S = \{2,3\}$ and 
$A = \{1,-1\}$, the set they study is exactly the set of integers of span at 
most~$k$.  They, like us, use a result of Erd\H{o}s, Pomerance, and 
Schmutz~\cite{EPS1991} to find auxiliary moduli~$m$ for which the image of the
set of $S$-units in $\ZZ/m\ZZ$ is small.  However, their interest lies in 
computing upper bounds for the density of such sets, while ours lies in 
producing lower bounds on the spans of individual integers --- and, of course, 
in computing actual values of~$s(n)$ and~$S(n)$.

\section{Proof of Theorem~\ref{T:growth}}
\label{S:growth}

For each positive integer $m$ we let $T(m)$ be the image in $\ZZ/m\ZZ$ of the
set of $\{2,3\}$-integers, and we let $t(m)$ denote the cardinality of $T(m)$.
For every $r\ge 2$ we define the \emph{expected degree-$r$ density} $D_r(m)$
of $m$ to be the smaller of $1$ and
\[ \frac{1}{m} \binom{t(m) + r - 1}{r}. \]

\begin{lemma}
\label{L:density}
Suppose $r\ge 2$ and $m$ is an integer whose expected degree-$r$ density is
less than $1$.  Then not every element of $\ZZ/m\ZZ$ can be expressed as a sum
of $r$ elements of $T(m)$.
\end{lemma}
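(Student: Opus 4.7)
The plan is to give a direct counting argument. Since addition in $\ZZ/m\ZZ$ is commutative, every length-$r$ sum of elements of $T(m)$ is determined by the multiset of its summands, not by the ordered tuple. So I would first note that the number of multisets of size $r$ drawn from a set of size $t(m)$ equals $\binom{t(m)+r-1}{r}$, by the standard stars-and-bars count.

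Next I would observe that the map sending a multiset of $r$ elements of $T(m)$ to its sum in $\ZZ/m\ZZ$ has image whose cardinality is at most the number of such multisets, namely $\binom{t(m)+r-1}{r}$. (Different multisets can of course collapse to the same residue; we only need the upper bound.) Therefore the number of elements of $\ZZ/m\ZZ$ representable as a sum of $r$ elements of $T(m)$ is at most $\binom{t(m)+r-1}{r}$.

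Finally, the assumption $D_r(m)<1$ means $\binom{t(m)+r-1}{r}<m$, and in particular the image of the sum map cannot exhaust $\ZZ/m\ZZ$; so some residue class is not attained. That residue class gives the desired non-representable element.

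There is essentially no obstacle: the whole content is the elementary observation that the number of multisets bounds the number of achievable sums, together with the definition of $D_r(m)$. The only nuance worth a sentence is the reminder that $D_r(m)$ is defined as the minimum of $1$ and $\binom{t(m)+r-1}{r}/m$, so the hypothesis $D_r(m)<1$ is literally equivalent to $\binom{t(m)+r-1}{r}<m$, which is exactly what the counting argument requires.
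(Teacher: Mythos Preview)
Your argument is correct and is essentially identical to the paper's proof: both count the multisets of size $r$ from $T(m)$ via stars-and-bars, bound the number of attainable sums by $\binom{t(m)+r-1}{r}$, and conclude from $D_r(m)<1$ that this is less than $m$. Your additional remark unpacking the definition of $D_r(m)$ is fine but not strictly necessary.
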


\begin{proof}
There are $\binom{t(m) + r - 1}{r}$ ways of choosing $r$ elements from $T(m)$
with repetition, so there are at most this many sums of $r$ elements of~$T(m)$.
If $D_r(m)$ is less than~$1$, then the number of such sums is less than~$m$, so
some element of $\ZZ/m\ZZ$ is not such a sum.
\end{proof}

\begin{proof}[Proof of Theorem~\ref{T:growth}]
Recall that the Carmichael function $\lambda$ is the function that assigns to
each integer $m\ge 1$ the exponent of the multiplicative group $(\ZZ/m\ZZ)^*$.
Using~\cite{APR1983}*{Prop.~10, p.~201}, Erd\H{o}s, Pomerance, and Schmutz 
show~\cite{EPS1991}*{\S2} that there is a constant $d>0$ such that there are 
infinitely many squarefree $m$ such that
\begin{equation}
\label{EQ:EPS}
\lambda(m) < (\log m)^{d\log\log\log m}.
\end{equation}
We will prove Theorem~\ref{T:growth} with $c = 1/(3d)$.

We note for future reference that the function 
$\log x / (\log\log x \log\log\log x)$ is increasing for $x\ge 12006$, and that
every integer $n < 12006$ has a double-base representation of length $4$ or
less (see Section~\ref{S:spans}).

Let $m$ be one of the infinite number of squarefree integers that satisfy 
equation~\eqref{EQ:EPS} and for which we also have
\[ \lambda(m) \ge 4  \text{\quad and\quad}
    \frac{c\log m}{\log\log m \log\log\log m} \ge 4.\]
Since $m$ is squarefree, there are at most $\lambda(m) + 1$ distinct powers of
$2$ in $\ZZ/m\ZZ$ and at most $\lambda(m) + 1$ distinct powers of $3$ 
in~$\ZZ/m\ZZ$.  It follows that 
\[t(m) \le 2(\lambda(m)+1)^2 < \lambda(m)^3.\]
Let $r = \lfloor c \log m / (\log\log m \log\log\log m)\rfloor$, so that
$r \ge 4$.  Then 
\begin{align*}
D_r(m) & < t(m)^r / m\\
       & < \lambda(m)^{3r} / m\\
       & < e^{3rd\log\log m\log\log\log m} / m\\
       & \le e^{\log m} / m\\
       &= 1,
\end{align*}
so by Lemma~\ref{L:density} there is a nonnegative integer $n < m$ such that
the image of $n$ in $\ZZ/m\ZZ$ cannot be written as the sum of $r$ elements 
of~$T(m)$.  It follows that 
\[ s(n) \ge r + 1 > \frac{c \log m }{\log\log m \log\log\log m}
                  > \frac{c \log n }{\log\log n \log\log\log n}.\]
(The final inequality depends on the fact that $n\ge 12006$, but we know that
$n\ge 12006$ because the span of $n$ is at least~$r+1\ge 5$.)
\end{proof}

\section{Primitive representations}
\label{S:primitive}

Let $n = n_1 + \cdots + n_r$ be a length-$r$ double-base representation of an 
integer $n$.  We say that the representation is \emph{primitive} if the 
greatest common divisor of the $n_i$ is~$1$; we say that the representation is
\emph{doubly primitive} if one of the $n_i$ is not divisible by $2$ and a
different $n_i$ is not divisible by~$3$.  Note that a representation that is
primitive but not doubly primitive must have one summand equal to $\pm 1$, and
all of the other summands must be divisible by~$6$.

\begin{lemma}
\label{L:reps}
Let $r\ge 2$.  An integer $n$ has a double-base representation of length $r$
if and only if at least one of the following four statements holds\textup{:}
\begin{enumerate}
\item $n$ has a doubly primitive representation of length~$r$.
\item $n+1$ is divisible by $6$ and $(n+1)/6$ has a representation of 
      length $r-1$.
\item $n-1$ is divisible by $6$ and $(n-1)/6$ has a representation of 
      length $r-1$.
\item There is a $\{2,3\}$-integer $d > 1$ that divides $n$ such that
      $n/d$ has a primitive representation of length~$r$.
\end{enumerate}
\end{lemma}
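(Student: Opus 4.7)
The plan is to prove the two directions of the equivalence separately. The ``if'' direction is a direct construction: for (1) there is nothing to do; for (2) and (3), multiplying a length-$(r-1)$ representation of $(n\pm 1)/6$ through by $6$ and adjoining $\mp 1$ yields a length-$r$ representation of $n$; for (4), multiplying a length-$r$ representation of $n/d$ by $d$ gives one for $n$. In each case the new summands remain $\{2,3\}$-integers by closure of that set under multiplication.

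For the ``only if'' direction, I begin with an arbitrary length-$r$ representation $n = n_1 + \cdots + n_r$ that is not doubly primitive, and aim to show (2), (3), or (4) must hold. Let $A = \{i \col 2 \nmid n_i\}$ and $B = \{i \col 3 \nmid n_i\}$. Since double primitivity asserts the existence of $i \in A$ and $j \in B$ with $i \neq j$, its negation forces one of three mutually exclusive alternatives: $A = \emptyset$, $B = \emptyset$, or $A = B = \{k\}$ for some single index~$k$.

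In the first case every summand is even, and in the second every summand is divisible by~$3$, so in either case the greatest common divisor $d$ of $n_1, \ldots, n_r$ is a $\{2,3\}$-integer (being a divisor of the $\{2,3\}$-integer $n_1$) strictly greater than~$1$. Dividing through by $d$ produces a length-$r$ representation of $n/d$ whose summands have gcd~$1$, i.e.\ a primitive one, so (4) holds. In the third case the summand $n_k$ is a $\{2,3\}$-integer divisible by neither $2$ nor $3$, hence $n_k = \pm 1$, while every other summand is divisible by both $2$ and~$3$, hence by~$6$. Setting aside the $\pm 1$ term and dividing the remaining $r-1$ summands by $6$ exhibits $(n \mp 1)/6$ as a length-$(r-1)$ double-base representation (the hypothesis $r \ge 2$ guarantees this shorter sum is nonempty), giving case~(2) or~(3).

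The only step requiring any real care is the combinatorial enumeration of the ways a representation can fail to be doubly primitive; once this trichotomy of $A$ and $B$ is established, each branch leads directly to one of the four conclusions, so I expect no serious obstacle beyond making that case split cleanly.
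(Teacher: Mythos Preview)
Your proof is correct and follows essentially the same approach as the paper's. The paper organizes the forward direction by first splitting on whether the representation is primitive, then (in the primitive case) on whether it is doubly primitive, invoking the observation made just before the lemma that a primitive-but-not-doubly-primitive representation has one summand equal to $\pm 1$ and all others divisible by~$6$; your trichotomy on $A$ and $B$ is just a direct unpacking of the same case analysis. One tiny quibble: the three alternatives $A=\emptyset$, $B=\emptyset$, $A=B=\{k\}$ are not literally mutually exclusive (the first two can hold simultaneously), but this does not affect the argument.
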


\begin{proof}
Suppose $n$ has a length-$r$ representation $n = n_1 + \cdots + n_r$.  If the
representation is doubly primitive, then statement (1) holds.  If the
representation is primitive but not doubly primitive, then either 
statement~(2) or statement~(3) holds.  If the representation is not primitive,
then statement~(4) holds, where we take $d$ to be the greatest common divisor
of the~$n_i$.

The converse is clear.
\end{proof}

\begin{remark}
It is easy, of course, to determine whether a positive integer has a length-$1$
double-base representation.
\end{remark}

By recursion, we see that a proof that an integer $n$ has no length-$r$
representation can be constructed from proofs that a number of integers (no
larger than $n$) have no doubly primitive representations of certain lengths.
Thus, in the following sections we will mostly focus on doubly-primitive 
representations.

\section{Low-density moduli}
\label{S:sparse}

To show that an integer $n$ has no double-base representations of length~$r$, 
we can find a modulus $m$ with $D_r(m) < 1$ and hope that $n$ reduces to one of
the elements of $\ZZ/m\ZZ$ that cannot be written as a sum of $r$ elements 
of~$T(m)$.   As a practical matter, however, it is easier to use 
Lemma~\ref{L:reps} to reduce the problem to finding proofs that certain 
integers have no doubly-primitive representations; as we shall see, this allows
us to use smaller moduli~$m$.

For each positive integer $m$, we let $T_2(m)$ and $T_3(m)$ denote the images in
$\ZZ/m\ZZ$ of the sets $\{\pm 2^i \col i\ge 0\}$ and $\{\pm 3^i \col i\ge 0\}$,
respectively, and we let $t_2(m)$ and $t_3(m)$ denote the cardinalities of
these two sets. 
We have already defined what a doubly-primitive representation of an integer
is; now we define a \emph{doubly-primitive length-$r$ representation} of an
element $n$ of $\ZZ/m\ZZ$ to be an expression 
\[n = n_1 + \cdots + n_r,\]
where $n_1$ lies in $T_2(m)$, $n_2$ lies in $T_3(m)$, and all $n_i$ lie
in~$T(m)$.  For every $r\ge 2$ we define the 
\emph{expected doubly-primitive degree-$r$ density} $d_r(m)$ of $m$ to be the
smaller of $1$ and
\[\frac{t_2(m) t_3(m)}{m} \binom{t(m) + r - 3}{r - 2}.\]

We leave the proof of the following lemma to the reader; it is similar to that
of Lemma~\ref{L:density}.

\begin{lemma}
Suppose $r\ge 2$ and $m$ is an integer whose expected doubly-primitive
degree-$r$ density is less than~$1$.  Then not every element of $\ZZ/m\ZZ$ 
has a doubly-primitive length-$r$ representation.\qed
\end{lemma}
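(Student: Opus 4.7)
The plan is to mimic the pigeonhole argument used in the proof of Lemma~\ref{L:density}, only with a slightly more careful count that respects the additional structural constraints on doubly-primitive representations. Specifically, I will bound from above the number of distinct sums in $\ZZ/m\ZZ$ that can arise from an expression of the form $n_1 + n_2 + n_3 + \cdots + n_r$ with $n_1 \in T_2(m)$, $n_2 \in T_3(m)$, and $n_3, \ldots, n_r \in T(m)$, and then compare this count to~$m$.

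First I would choose $n_1$ and $n_2$ independently: there are $t_2(m)$ choices for $n_1$ and $t_3(m)$ choices for $n_2$, giving $t_2(m)\,t_3(m)$ ordered pairs. Next, since the remaining summands $n_3,\ldots,n_r$ are unordered as far as the value of the sum is concerned, the number of possibilities for the multiset $\{n_3,\ldots,n_r\}\subseteq T(m)$ is the number of size-$(r-2)$ multisets drawn from a set of size $t(m)$, namely $\binom{t(m)+r-3}{r-2}$. Multiplying these counts together gives an upper bound of
\[
  t_2(m)\,t_3(m)\,\binom{t(m)+r-3}{r-2}
\]
on the total number of distinct sums representable in the doubly-primitive form of length~$r$.

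Finally, I would invoke the hypothesis that $d_r(m) < 1$, which by the definition of the expected doubly-primitive degree-$r$ density means exactly that the above upper bound is strictly less than $m$. By pigeonhole, there must then exist an element of $\ZZ/m\ZZ$ that is not expressible as such a sum, completing the proof. There is no real obstacle here; the only subtlety worth noting is the correct use of the multiset coefficient $\binom{t(m)+r-3}{r-2}$ for the unordered last $r-2$ summands (as opposed to the ordered count $t(m)^{r-2}$), which parallels the role played by $\binom{t(m)+r-1}{r}$ in Lemma~\ref{L:density}.
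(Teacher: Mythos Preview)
Your argument is correct and is precisely the pigeonhole count the paper has in mind; indeed, the paper omits the proof entirely, noting only that it is similar to that of Lemma~\ref{L:density}. Your handling of the count---treating $n_1$ and $n_2$ as distinguished and the remaining $r-2$ summands as an unordered multiset from $T(m)$---matches the definition of $d_r(m)$ exactly.
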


Suppose we have found an integer $m$ with $d_r(m)< 1$, and suppose we would 
like to prove that an integer $n$ has no doubly-primitive length-$r$ 
representation by showing that its image in $\ZZ/m\ZZ$ has no doubly-primitive
length-$r$ representation.  The next lemma, whose simple proof we omit,
suggests an efficient way of checking the latter condition.

\begin{lemma}
\label{L:algorithm}
Let $m>0$ and $r\ge 2$ be integers.  Define two subsets $S_1$ and $S_2$ of 
$\ZZ/m\ZZ$ as follows\textup{:}  If $r$ is odd, say $r = 2u + 1$, take
\begin{align*}
    S_1 &= \{ a_1 + a_2 + \cdots + a_{u+1} \col a_1 \in T_2(m),\ 
                                                a_2\in T_3(m),\\
        & \phantom{= \{ a_1 + a_2 + \cdots + a_{u+1} \col \qquad} 
                                            a_3,\ldots,a_{u+1}\in T(m) \}, \\
    S_2 &= \{ a_1 + a_2 + \cdots + a_u \col a_1,\ldots,a_u\in T(m) \};
\end{align*}
if $r$ is even, say $r = 2u$, take
\begin{align*}
    S_1 &= \{ a_1 + a_2 + \cdots + a_u \col a_1 \in T_2(m),\ 
                                            a_2,\ldots,a_u\in T(m) \}, \\
    S_2 &= \{ a_1 + a_2 + \cdots + a_u \col a_1 \in T_3(m),\ 
                                            a_2,\ldots,a_u\in T(m) \}.
\end{align*}
Then an element $n$ of $\ZZ/m\ZZ$ has no doubly-primitive length-$r$
representation if and only if the intersection 
$S_1 \cap \{n - a \col a \in S_2\}$ is empty.\qed
\end{lemma}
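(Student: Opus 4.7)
The plan is a straightforward meet-in-the-middle argument. Unpacking the definition, a doubly-primitive length-$r$ representation of an element $n\in\ZZ/m\ZZ$ is an expression
\[ n = n_1 + n_2 + n_3 + \cdots + n_r \]
with $n_1 \in T_2(m)$, $n_2 \in T_3(m)$, and $n_3,\ldots,n_r \in T(m)$. The lemma then amounts to splitting this sum into two pieces whose allowable shapes match precisely the definitions of $S_1$ and $S_2$.

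For the forward direction, I would suppose that such a representation exists and argue case by case on the parity of $r$. If $r = 2u+1$ is odd, I would group the first $u+1$ summands, which include both $n_1$ and $n_2$, into a partial sum $s_1$, and group the remaining $u$ summands into $s_2$; then $s_1 \in S_1$, $s_2 \in S_2$, and $s_1 = n - s_2$. If $r = 2u$ is even, I would place $n_1$ together with $n_3,\ldots,n_{u+1}$ into $s_1$, and $n_2$ together with $n_{u+2},\ldots,n_r$ into $s_2$, so that again $s_1 \in S_1$, $s_2 \in S_2$, and $s_1 = n - s_2$. In either case, $s_1 \in S_1 \cap \{n - a \col a \in S_2\}$.

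The converse direction is immediate: given $s_1 \in S_1$ and $s_2 \in S_2$ with $s_1 + s_2 = n$, I would concatenate the defining expressions of $s_1$ and $s_2$ to produce a length-$r$ decomposition of $n$ whose summands include an element of $T_2(m)$ and an element of $T_3(m)$ in the positions required by the definition of a doubly-primitive representation.

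There is no real mathematical obstacle; the lemma is pure bookkeeping. The only care required is to keep the two parity cases straight and verify that the summand counts, namely $(u+1) + u = 2u+1$ and $u + u = 2u$, and the placements of the $T_2$- and $T_3$-distinguished summands fit together correctly, so as to avoid an off-by-one error in the indices. The real value of the lemma is algorithmic, since it reduces the apparently exponential-in-$r$ task of enumerating doubly-primitive length-$r$ sums to a collision check between two sets of roughly square-root size.
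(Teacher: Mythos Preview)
Your argument is correct and is exactly the bookkeeping the paper has in mind; indeed the paper omits the proof entirely, calling it ``simple.'' Your parity-by-parity splitting and the index counts are all fine, so there is nothing to add.
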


The work (and the amount of memory) required to compute the set intersection
mentioned in the lemma is a nearly linear function of the sum of the sizes of
the two sets $S_1$ and $S_2$.  We can work out reasonable approximations for
the sizes of these sets.  If $r=2u+1$ is odd, we have
\[
\#S_1 \le t_2(m) t_3(m)\binom{t(m) + u - 2}{u-1} 
      \approx \frac{t_2(m) t_3(m) t(m)^{u-1}}{(u-1)!}
\]
and
\[
\#S_2 \le              \dbinom{t(m) + u - 1}{u} 
      \approx \frac{             t(m)^u     }{ u   !};
\]
while if $r=2u$ is even, then
\[
\#S_1 \le t_2(m)       \binom{t(m) + u - 2}{u-1} 
      \approx \frac{t_2(m)        t(m)^{u-1}}{(u-1)!} 
\]
and 
\[
\#S_2 \le t_3(m)      \binom{t(m) + u - 2}{u-1} 
      \approx \frac{t_3(m)        t(m)^{u-1}}{(u-1)!}.
\]
Based on these upper bounds, we approximate $\#S1 + \#S2$ by the function
\[
w_r(m) = \begin{cases}
\big(u t_2(m) t_3(m) + t(m)\big) \dfrac{t(m)^{u-1}}{u!} & \text{if $r = 2u+1$ is odd,}\\
&\\
\big(t_2(m) + t_3(m)\big) \dfrac{t(m)^{u-1}}{(u-1)!}    & \text{if $r = 2u$ is even.}
\end{cases}
\]

Here is one method of finding values of $m$ with low densities and small work
estimates.  Given integers $a,b>0$, we write $2^a - 1 = 3^x u$ with $3\nmid u$
and $3^b - 1 = 2^y v$ with $2\nmid v$, and we take
$m = 2^y 3^x \gcd(u,v)$.  Then we compute the sets $T_2(m)$, $T_3(m)$, and 
$T(m)$, and compute $d_r(m)$ and $w_r(m)$ for the desired value of $r$.  
In Table~\ref{Tbl:densities} we list several useful values of $m$ that we 
obtained in this way, together with their densities and work factors for 
$r=2,3,4$, and~$5$.

\begin{table}
\caption{Expected doubly-primitive degree-$r$ densities $d_r(m)$ and work
factors $w_r(m)$ for $r=2,3,4,5$ and several values of $m$.}
\label{Tbl:densities} 
\vspace{1ex}
\begin{center}
\renewcommand{\arraystretch}{1.2}
\begin{tabular}{|r|r|r|r|r|r|r|}
\hline
$a$   & $b$   &                      $m$ & $\log d_2$ & $\log d_3$ & $\log d_4$ & $\log d_5$               \\ \cline{4-7}
      &       &                          & $\log w_2$ & $\log w_3$ & $\log w_4$ & $\log w_5$               \\ \hline\hline
$144$ & $432$ & $1811941545963463911360$ &   $-36.48$ &   $-24.70$ &   $-13.61$ &    $-2.93$               \\ \cline{4-7}  
      &       &                          &   $  7.06$ &   $ 12.88$ &   $ 18.84$ &    $24.47$               \\ \hline\hline
$288$ & $144$ &    $7409469211410651840$ &   $-31.39$ &   $-20.02$ &   $ -9.35$ &    $ 0.0\phantom{0} $    \\ \cline{4-7}
      &       &                          &   $  6.78$ &   $ 12.47$ &   $ 18.15$ &\multicolumn{1}{c|}{---}  \\ \hline\hline
$144$ & $144$ &      $38391032183474880$ &   $-26.80$ &   $-16.11$ &   $ -6.10$ &    $ 0.0\phantom{0} $    \\ \cline{4-7} 
      &       &                          &   $  6.39$ &   $ 11.79$ &   $ 17.08$ &\multicolumn{1}{c|}{---}  \\ \hline\hline
 $72$ & $216$ &        $952177069640160$ &   $-23.37$ &   $-13.61$ &   $ -4.55$ &    $ 0.0\phantom{0} $    \\ \cline{4-7}
      &       &                          &   $  6.38$ &   $ 11.35$ &   $ 16.14$ &\multicolumn{1}{c|}{---}  \\ \hline\hline
$144$ &  $48$ &         $54610287600960$ &   $-21.30$ &   $-11.67$ &   $ -2.72$ &    $ 0.0\phantom{0} $    \\ \cline{4-7}
      &       &                          &   $  6.00$ &   $ 10.73$ &   $ 15.63$ &\multicolumn{1}{c|}{---}  \\ \hline\hline
 $36$ & $108$ &          $1099511627760$ &   $-17.94$ &   $ -8.85$ &   $ -0.45$ &    $ 0.0\phantom{0} $    \\ \cline{4-7}
      &       &                          &   $  5.71$ &   $ 10.19$ &   $ 14.80$ &\multicolumn{1}{c|}{---}  \\ \hline
\end{tabular}
\end{center} 
\end{table}

\section{Proof of Theorem~\ref{T:spans}}
\label{S:spans}

Using the values of $m$ given in Table~\ref{Tbl:densities}, it is a simple
matter to use Lemmas~\ref{L:reps} and~\ref{L:algorithm} to construct proofs
that certain integers have spans greater than $2$, $3$, and~$4$.  Magma code
for doing this can be found in the file \texttt{DoubleBase.magma}, available at
the URL mentioned in the introduction.  The file also contains Magma code that
verifies that every positive integer less than $103$ has span at most~$2$, that
every positive integer less than $4985$ has span at most~$3$, and that every
positive integer less than $641687$ has span at most~$4$.  Together, these 
programs confirm the values for $S(3)$, $S(4)$, and $S(5)$ given in 
Theorem~\ref{T:spans}.

The proof that $S(6) = 326552783$ also depends on machine computation, but the
work involved is large enough that we move from Magma to C.

For the rest of this section, we set
\begin{align*}
n &= 326552783,\\
m &= 1811941545963463911360,\\
m_0 &= 4441033200890842920 = m / 408.
\end{align*}
Let $T$ be the set of all $\{2,3\}$-integers, and let $T_2$ and $T_3$ be the
subsets of $T$ consisting of the elements whose only prime divisors are $2$ 
and~$3$, respectively. We set
\begin{align*}
P &= \{ r + s \col r, s \in T \},\\
S &= \{ r + s + t \col r\in T_2, s\in T_3,  t\in T \}.
\end{align*}
Any number that can be written as the difference of an element of $P$ and an
element of $S$ has span at most $5$.  The program \texttt{SumOf5.c}, available
at the URL mentioned in the introduction, shows that every positive integer 
less than~$n$ is contained in $P-S$.  (In fact, the program only looks at 
elements of $P$ and $S$ that have absolute value at most $2n$ and that are 
obtained from summands of absolute value at most~$2^{61}$.)

To show that $n$ cannot be written as the sum of five $\{2,3\}$-integers, we 
use Lemmas~\ref{L:reps} and~\ref{L:algorithm}.  The programs found in 
\texttt{DoubleBase.magma} can carry out all the necessary computations, except
for the largest one: showing that $n$ has no doubly-primitive representations
of length~$5$.  To carry out this step of the proof, we would like to show that
the image of $n$ in $\ZZ/m\ZZ$ has no doubly-primitive representations of 
length~$5$.  This is a slightly awkward computation on current desktop 
computers, because the modulus $m$ does not fit into a $64$-bit word. Instead,
we find all doubly-primitive length-$5$ representations of the image of $n$
in~$\ZZ/m_0\ZZ$.  This computation is done by the program \texttt{Check5.c}.  
Then we take the resulting list of mod-$m_0$ representations and use the Magma
program found in the file \texttt{Check5.magma} to see whether any of them can
be lifted up to a mod-$m$ representation.  We find that none of the mod-$m_0$
representations can be lifted to a mod-$m$ representation, so $n$ has no 
doubly-primitive representations of length~$5$.


\section{Acknowledgment}
The authors are grateful to Igor Shparlinski for informing them of the
paper of {\'A}d{\'a}m, Hajdu, and Luca~\cite{AHL2009}.





\begin{bibdiv}
\begin{biblist}

\bib{AHL2009}{article}{
   author={{\'A}d{\'a}m, Zs.},
   author={Hajdu, L.},
   author={Luca, F.},
   title={Representing integers as linear combinations of $S$-units},
   journal={Acta Arith.},
   volume={138},
   date={2009},
   number={2},
   pages={101--107},
   issn={0065-1036},
   note={\href{http://dx.doi.org/10.4064/aa138-2-1}
              {DOI: 10.4064/aa138-2-1}},
}

\bib{APR1983}{article}{
   author={Adleman, Leonard M.},
   author={Pomerance, Carl},
   author={Rumely, Robert S.},
   title={On distinguishing prime numbers from composite numbers},
   journal={Ann. of Math. (2)},
   volume={117},
   date={1983},
   number={1},
   pages={173--206},
   issn={0003-486X},
   note={\href{http://dx.doi.org/10.2307/2006975}
              {DOI: 10.2307/2006975}},
}

\bib{ADDS2006}{article}{
   author={Avanzi, Roberto},
   author={Dimitrov, Vassil},
   author={Doche, Christophe},
   author={Sica, Francesco},
   title={Extending scalar multiplication using double bases},
   conference={
      title={Advances in cryptology---ASIACRYPT 2006},
   },
   book={
      series={Lecture Notes in Comput. Sci.},
      volume={4284},
      publisher={Springer},
      place={Berlin},
   },
   date={2006},
   pages={130--144},
   note={\href{http://dx.doi.org/10.1007/11935230_9}
              {DOI: 10.1007/11935230\_9}},
}
    
\bib{BI2009}{article}{
   author={Berth{\'e}, Val{\'e}rie},
   author={Imbert, Laurent},
   title={Diophantine approximation, Ostrowski numeration and the
   double-base number system},
   journal={Discrete Math. Theor. Comput. Sci.},
   volume={11},
   date={2009},
   number={1},
   pages={153-172},
   issn={1365-8050},
   note={\url{http://www.dmtcs.org/dmtcs-ojs/index.php/dmtcs/article/view/1011}},
}
    
\bib{magma}{article}{
   author={Bosma, Wieb},
   author={Cannon, John},
   author={Playoust, Catherine},
   title={The Magma algebra system. I. The user language},
   journal={J. Symbolic Comput.},
   volume={24},
   date={1997},
   number={3-4},
   pages={235--265},
   issn={0747-7171},
   note={Computational algebra and number theory (London, 1993).
         \href{http://dx.doi.org/10.1006/jsco.1996.0125}
              {DOI: 10.1006/jsco.1996.0125}},
}

\bib{DJM1998}{article}{
   author={Dimitrov, V. S.},
   author={Jullien, G. A.},
   author={Miller, W. C.},
   title={An algorithm for modular exponentiation},
   journal={Inform. Process. Lett.},
   volume={66},
   date={1998},
   number={3},
   pages={155--159},
   issn={0020-0190},
   note={\href{http://dx.doi.org/10.1016/S0020-0190(98)00044-1}
              {DOI: 10.1016/S0020-0190(98)00044-1}},
}

\bib{DIM2008}{article}{
   author={Dimitrov, Vassil},
   author={Imbert, Laurent},
   author={Mishra, Pradeep K.},
   title={The double-base number system and its application to elliptic
   curve cryptography},
   journal={Math. Comp.},
   volume={77},
   date={2008},
   number={262},
   pages={1075--1104},
   issn={0025-5718},
   note={\href{http://dx.doi.org/10.1090/S0025-5718-07-02048-0}
              {DOI: 10.1090/S0025-5718-07-02048-0}},
}
  
\bib{DIM2005}{article}{
   author={Dimitrov, Vassil},
   author={Imbert, Laurent},
   author={Mishra, Pradeep Kumar},
   title={Efficient and secure elliptic curve point multiplication using
   double-base chains},
   conference={
      title={Advances in cryptology---ASIACRYPT 2005},
   },
   book={
      series={Lecture Notes in Comput. Sci.},
      volume={3788},
      publisher={Springer},
      place={Berlin},
   },
   date={2005},
   pages={59--78},
   note={\href{http://dx.doi.org/10.1007/11593447_4}
              {DOI: 10.1007/11593447\_4}},
}

\bib{DI2006}{article}{
   author={Doche, Christophe},
   author={Imbert, Laurent},
   title={Extended double-base number system with applications to elliptic
   curve cryptography},
   conference={
      title={Progress in cryptology---INDOCRYPT 2006},
   },
   book={
      series={Lecture Notes in Comput. Sci.},
      volume={4329},
      publisher={Springer},
      place={Berlin},
   },
   date={2006},
   pages={335--348},
   note={\href{http://dx.doi.org/10.1007/11941378_24}
              {DOI: 10.1007/11941378\_24}},
}

\bib{Ellison}{article}{
   author={Ellison, W. J.},
   title={On a theorem of S. Sivasankaranarayana Pillai},
   conference={
      title={S\'eminaire de Th\'eorie des Nombres, 1970--1971 (Univ.
      Bordeaux I, Talence), Exp. No. 12},
   },
   book={
      publisher={Lab. Th\'eorie des Nombres, Centre Nat. Recherche Sci.,
   Talence},
   },
   date={1971},
}

\bib{EPS1991}{article}{
   author={Erd{\H{o}}s, Paul},
   author={Pomerance, Carl},
   author={Schmutz, Eric},
   title={Carmichael's lambda function},
   journal={Acta Arith.},
   volume={58},
   date={1991},
   number={4},
   pages={363--385},
   issn={0065-1036},
}

\bib{MD2008}{article}{
   author={Mishra, Pradeep Kumar},
   author={Dimitrov, Vassil},
   title={A combinatorial interpretation of double base number system and
   some consequences},
   journal={Adv. Math. Commun.},
   volume={2},
   date={2008},
   number={2},
   pages={159--173},
   issn={1930-5346},
   note={\href{http://dx.doi.org/10.3934/amc.2008.2.159}
              {DOI: 10.3934/amc.2008.2.159}},
}

\bib{WLCL2006}{article}{
   author={Wong, K. W.},
   author={Lee, Edward C. W.},
   author={Cheng, L. M.},
   author={Liao, Xiaofeng},
   title={Fast elliptic scalar multiplication using new double-base chain
   and point halving},
   journal={Appl. Math. Comput.},
   volume={183},
   date={2006},
   number={2},
   pages={1000--1007},
   issn={0096-3003},
   note={\href{http://dx.doi.org/10.1016/j.amc.2006.05.111}
              {DOI: 10.1016/j.amc.2006.05.111}},
}

\bib{ZZH2008}{article}{
   author={Zhao, ChangAn},
   author={Zhang, FangGuo},
   author={Huang, JiWu},
   title={Efficient Tate pairing computation using double-base chains},
   journal={Sci. China Ser. F},
   volume={51},
   date={2008},
   number={8},
   pages={1096--1105},
   issn={1009-2757},
   note={\href{http://dx.doi.org/10.1007/s11432-008-0070-9}
              {DOI: 10.1007/s11432-008-0070-9}},
}

\end{biblist}
\end{bibdiv}

\end{document}